\def\max{{\mathfrak m}}                   
\def\res{{\bf k}}                   
\newcommand{\m}{\mathfrak m}
\newcommand{\ud}{{\underline{d}}}
\newcommand{\uf}{{\underline{f}}}
\newcommand{\uF}{{\underline{F}}}
\def\ann{\operatorname{Ann_R}}
\def\soc{\operatorname{soc}}
\def\HF{\operatorname{HF}}
\def\Res{\operatorname{Res}}
\def\det{\operatorname{det}}
\def\Jac{\operatorname{Jac}}
\def\spec{{\bf Spec}}
\def\HF{\operatorname{HF}}
\def\deg{\operatorname{deg}}
\def\dim{\operatorname{dim}}
\newcommand{\proj}[1]{\mathbb P_{\res}^{#1}}
\newtheorem{theorem}{Theorem}[section]
\newtheorem{lemma}[theorem]{Lemma}
\newtheorem{proposition}[theorem]{Proposition}
\newtheorem{remark0}[theorem]{Remark}
\newtheorem{example0}[theorem]{Example}
\newenvironment{example}{\begin{example0}\rm}{\end{example0}}
\newenvironment{remark}{\begin{remark0}\rm}{\end{remark0}}
\newcommand{\propref}[1]{Proposition~\ref{#1}}
\newcommand{\lemref}[1]{Lemma~\ref{#1}}
\title{  \bf  Towards a characterization of the inverse systems of complete intersections.
\footnote{ 2010 {\it Mathematics Subject Classification}.
Primary 13H10; Secondary 13H15; 14C05
\newline
\indent \ \ {\it Key words and Phrases:} Number of generators, Artin Gorenstein local rings, Inverse system.}}
\author{\large   J. Elias
\thanks{Partially supported by PID2022-137283NB-C22.}
}
\date{ \today}
\begin{document}
\maketitle
\begin{abstract}
In this paper we give conditions on a homogeneous polynomial for which the associated graded Artin algebra is a complete intersection.
\end{abstract}

\section{Introduction}

We denote by $S=\res[x_0,\dots,x_n]$  the polynomial ring on the  variables $x_0,\dots,x_n$ over an algebraically closed field $\res$ of characteristic zero.
We denote  the homogeneous maximal ideal of $S$  by $\max=(x_0,\dots,x_n)$.

Macaulay's duality, instance of Matlis' duality, establish a reversing one-to-one correspondence between the Artin Gorenstein quotients $A=S/I$ and their inverse system $I^\perp=\langle H\rangle$, see \propref{mac}.
In this paper we address the following problem:
characterize the polynomials $H$ such that its Macaulay's dual $I=\ann\langle H\rangle$ is a complete intersection, see \cite[Chapter 9, L]{IK99}.
Very few instances of this problem  are known. For example, if $H$ is a monomial then $\ann\langle H\rangle$ is a monomial complete intersection.
In \cite{HWW17} the case of quadratic complete intersections is considered; see \cite{HWW17} and its reference list for some other known cases.

In this paper we give explicit conditions on a homogeneous polynomial $H$ for which $I=\ann\langle H \rangle$ is a complete intersection, \propref{char}.
The key idea of this paper is to consider the resultant of a family of homogeneous forms
$\uf=(f_0,\dots ,d_n)$ and the determinant of its jacobian matrix.
Recall that the resultant of a family of homogeneous forms is the generator of a closed projective hypersurface, \cite{EM07}.
In the case considered in this paper, we get a locally constructible set, \propref{first-set}.

In the second section we characterize   the polynomials $H$ such that its Macaulay's dual $I=\ann\langle H\rangle$ is a complete intersection, \propref{char}.
In the third section we apply this result to short algebras.

\medskip
\noindent
{\sc{Preliminaries.}}
Next,\textbf{} we fix some notations and we recall the basic results on Macaulay inverse systems,
see  \cite{IK99} for more results on inverse systems.

Let $\Gamma=\res[y_0,\dots,y_n]$ be the polynomial ring on the  variables $y_0,\dots,y_n$.
This  polynomial ring can be considered a $S$-module  by derivation that we denote by $\circ$, defined by
$$
\begin{array}{ cccc}
\circ: & S  \times \Gamma  &\longrightarrow &  \Gamma   \\
                       &       (f , g) & \to  &  f  \circ g = f (\partial_{y_1}, \dots, \partial_{y_n})(g)
\end{array}
$$

\noindent
where $  \partial_{y_i} $ denotes the partial derivative with respect to $y_i$;
$S$ is the injective hull of the residue field $\res$ as $S$-module, \cite{Gab59}.

Given a  polynomial $H\in \Gamma$ we denote by
$\langle H \rangle$ the sub-$S$-module of $\Gamma$ generated by
$H$.
Notice that $\langle H \rangle$  is the $\res$-vector subspace of $\Gamma$ generated
by  $H$ and its derivatives of any order.
On the other hand, we denote by $\langle H \rangle_\res$ the one-dimensional $\res$-vector space generated by $H$.

\begin{proposition}[Macaulay-Matlis duality, \cite{IK99}]
\label{mac}
There is a order-reversing bijection $\perp$ between the set of finitely generated (graded) sub-$S$-submodules of $\Gamma$ and the set of (homogeneous) $\m$-primary ideals of $S$ given by:
if $M$ is a  submodule of $\Gamma$ then $M^\perp=(0:_S M)\subset S$, and $I^\perp=(0:_\Gamma I)\subset \Gamma$ for an  ideal $I\subset S$.
An ideal $I$ is Gorenstein if and only in $I^\perp$ is generated by a single polynomial $F\in \Gamma$.
 \end{proposition}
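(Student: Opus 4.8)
The plan is to deduce the whole statement from Matlis duality, the only real input being the identification of $\Gamma$, equipped with the derivation action $\circ$, with the injective hull $E:=E_S(\res)$ of the residue field. In characteristic zero this action agrees, after rescaling each ``inverse monomial'' basis vector by a nonzero scalar, with the contraction action of $S$ on the graded dual $\bigoplus_{d\ge0}\operatorname{Hom}_\res(S_d,\res)$, which is the standard model of $E_S(\res)$; this is what the reference to \cite{Gab59} (see also \cite{IK99}) provides, and I would simply cite it.

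Next I would record the relevant finiteness. A finitely generated (graded) submodule $M=\langle H_1,\dots,H_r\rangle\subseteq\Gamma$ has finite $\res$-dimension — differentiating polynomials of bounded degree produces only finitely many linearly independent ones — hence finite length, while an ideal $I\subseteq S$ is $\m$-primary precisely when $S/I$ has finite length. So both sides of the claimed correspondence consist of finite-length modules, on which the functor $(-)^\vee:=\operatorname{Hom}_S(-,E)$ is an exact contravariant self-equivalence satisfying $(-)^{\vee\vee}\cong\operatorname{id}$, $\ell(N^\vee)=\ell(N)$, and $\operatorname{Ann}_S(N^\vee)=\operatorname{Ann}_S(N)$; incompleteness of $S$ is harmless since $\operatorname{Hom}_S(N,E)=\operatorname{Hom}_{\hat S}(N,E)$ for finite-length $N$ (in the graded setting, use graded $\operatorname{Hom}$ throughout).

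Then I would unwind the two annihilator operations. For $\m$-primary $I$ one has $I^\perp=(0:_\Gamma I)=\operatorname{Hom}_S(S/I,E)=(S/I)^\vee$, so $I^\perp$ has finite length and is therefore a finitely generated submodule of $\Gamma$, and $(I^\perp)^\perp=\operatorname{Ann}_S(I^\perp)=\operatorname{Ann}_S((S/I)^\vee)=\operatorname{Ann}_S(S/I)=I$. For a finitely generated $M\subseteq\Gamma=E$, dualizing $M\hookrightarrow E$ yields a surjection $E^\vee\twoheadrightarrow M^\vee$; since $M^\vee$ is finite-length and $E^\vee$ is the completion of $S$, $M^\vee$ is cyclic, so $M^\vee\cong S/J$ with $J$ $\m$-primary and $J=\operatorname{Ann}_S(M^\vee)=\operatorname{Ann}_S(M)=M^\perp$, whence $(M^\perp)^\perp=(0:_\Gamma M^\perp)=(S/M^\perp)^\vee=M^{\vee\vee}\cong M$. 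Thus $\perp$ is a bijection, and it is order-reversing because inclusions of ideals, resp. of submodules, reverse annihilators.

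Finally, for $A=S/I$ with $I$ $\m$-primary: by Nakayama the minimal number of generators of the $S$-module $I^\perp=A^\vee$ is $\dim_\res(A^\vee/\m A^\vee)$, and the standard isomorphism $\soc(A)^\vee\cong A^\vee/\m A^\vee$ identifies this with $\dim_\res\soc(A)$. Since an Artinian local ring is Gorenstein exactly when its socle is one-dimensional, $A$ is Gorenstein iff $I^\perp$ is generated by a single $F\in\Gamma$. The one genuine obstacle is the identification $\Gamma\cong E_S(\res)$ together with $E^\vee\cong\hat S$; once these are granted, everything above is a formal consequence of the duality $(-)^\vee$ on finite-length modules.
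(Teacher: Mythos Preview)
The paper does not supply its own proof of this proposition: it is stated with the attribution \cite{IK99} and then used as a black box. Your argument is a correct and standard deduction of Macaulay's inverse system correspondence from Matlis duality, carried out via the identification of $\Gamma$ (under the derivation action) with the injective hull $E_S(\res)$ and the exact self-duality $(-)^\vee=\operatorname{Hom}_S(-,E)$ on finite-length modules. The steps --- finite length on both sides, $(I^\perp)^\perp=I$ via $\operatorname{Ann}_S((S/I)^\vee)=\operatorname{Ann}_S(S/I)$, cyclicity of $M^\vee$ as a quotient of $E^\vee\cong\hat S$, and the Gorenstein criterion via $\dim_\res\soc(A)=\dim_\res(A^\vee/\m A^\vee)$ --- are all sound. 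So there is nothing to compare against: you have supplied a proof where the paper simply cites one.
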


\noindent
$I^\perp$ is the so-called Macaulay's inverse system of $I$.

Given an Artinian graded quotient $A=S/I$
we denote  by $\soc(A)=(0:_A \max)$ the socle of $A$.
The ring $A$ is Gorenstein if and only if $\dim_\res(\soc(A))=1$, if this is the case then  $\soc(A)$ is a principal ideal of $A$ generated by a homogeneous element of degree $s$. This integer is the socle degree of $A$.
By Macaulay's duality there exists a homogeneous element $F$ of degree $s$ such that $I^\perp=\langle F\rangle$.

The examples of this paper are done by using the  Singular library  \cite{E-InvSyst14},
\cite{DGPS}.

\section{Characterization of complete intersections}

The following result is a consequence of \cite{Ku92}, but in the Artinian case  we can give an easier proof by using Macaulay's duality.

\begin{lemma}
\label{KU}
Let $A=S/K$ be a graded Gorenstein ring of socle degree $s$, and let $J$ be a homogeneous ideal such that $K\subset J$.
If  $\soc(R/J)_s\neq 0$ then $K=J$.
\end{lemma}
\begin{proof}
We have $J^\perp\subset K^\perp=\langle F \rangle$
with $F$ a homogeneous form of degree $s$.
Since $\soc(R/J)_s\neq 0$ there exist a homogeneous polynomial  $G\in J^\perp\setminus \{0\}$ with $\deg(G)=s$.
Hence $0\neq G\in K^\perp_s=\langle F\rangle_\res$.
Since $\dim_\res\langle F\rangle_\res =1$ there exist
$\lambda\in \res\setminus\{0\}$ such that
$G=\lambda F$.
From this we get that
$$
K^\perp=\langle F\rangle=\langle G\rangle\subset J^\perp.
$$
Hence $K^\perp=J^\perp$ and by Macaulay's duality $K=J$.
\end{proof}

 We say that a homogeneous ideal $I\subset S$ is a complete intersection of type  $\ud=(d_0,\dots,d_n)\in \mathbb N^{d+1}$, with $2\le d_0\le\dots\le d_n$,
if there exists homogeneous regular sequence  $\uf=\{f_0,\dots ,f_n\}$ minimally generating $I$ such that $\deg(f_i)=d_i$, $i=0,\dots,n$.

Given an integer $d\ge 1$ we denote by $\proj{N(d)}$ the projective space parametrizating the forms of degree $d$ on $x_0,\dots,x_n$, where $N(d)=\binom{n+d}{n}-1$.
Given integers $2\le d_0\le\dots \le d_n$ we consider the multi-projective space
$
{\mathbb V}= \prod_{i=0}^n \proj{N(d_i)}
$
parameterizing the $(n+1)$-pla of homogeneous forms $\uF=\{F_0,\dots,F_n\}$,  where $F_i$  is a homogeneous form of degree $d_i$, $i=0,\dots, n$.
Recall that the resultant $\Res(\uF)$ defines an irreducible  hypersurface of $\mathbb V$,
\cite[Chapitre 5, Section 5.3]{EM07}.
In fact, $\Res(\uF)$ is the generator of the defining ideal of the variety of $\mathbb V$ obtained by projection on the first component of the incidence variety
$\{(\uF, p)\in {\mathbb V} \times {\mathbb P}_\res^n \mid F_i(p)=0, i=0,\dots, n\}$.
The resultant $\Res(\uF)$ is a polynomial of  degree $\prod_{i\neq j} d_j$ on the coefficients of $F_i$,
$i=0,\dots, n$, \cite[Proposition 5.21]{EM07}.
The resultant can be computed effectively,
\cite[Algorithme 5.22]{EM07}, and can be a dense polynomial, see \cite[Example 5.20]{EM07}.

Given a $n+1$-pla  $\uf=(f_0,\dots, f_n)$ of homogeneous elements of $S$ of degree $d_0,\dots, d_n$ the ideal $(\uf)$ is $\max$-primary if and only if $\Res(\uf)\neq 0$, \cite[Proposition 5.17]{EM07}.
These conditions are equivalent to $\dim_\res(S/(\uf))< \infty$, or that $\uf$ is a maximal regular sequence of $S$.

The main result of this paper is the following:

\begin{proposition}
\label{char}
Let $H\in S\setminus\{0\}$ be a homogeneous polynomial.
Then the ideal $\ann\langle H\rangle$ is a complete intersection of type
$\ud=(d_0,\dots,d_n)$ if and only if there exists homogeneous forms
$\uf=\{f_0,\dots ,f_n\}\subset S$ such that $\deg(f_i)=d_i$, $i=0,\dots,n$, and
\begin{enumerate}
\item[(i)] $\Res(\uf)\neq 0$,
\item[(ii)] $\uf\circ H=0$, and
\item[(iii)]  $\det(\Jac(\uf))\circ H\neq 0$.
\end{enumerate}
\noindent
If these equivalent conditions hold then
$\ann\langle H\rangle=(f_0,\dots, f_n)$ and
$\deg(H)=\sum_{i=0}^n d_i -(n+1).$
\end{proposition}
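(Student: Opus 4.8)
The plan is to prove the two implications separately, using \propref{mac} (Macaulay--Matlis duality), \lemref{KU}, the equivalence ``$\Res(\uf)\neq 0\iff(\uf)$ is $\m$-primary $\iff\uf$ a maximal regular sequence'' recalled above, and the classical facts on a homogeneous regular sequence $\uf=(f_0,\dots,f_n)$ of $S$ with $\deg(f_i)=d_i$: the graded Artinian complete intersection $S/(\uf)$ is Gorenstein of socle degree $s:=\sum_{i=0}^n d_i-(n+1)$, and, since $\res$ has characteristic zero, its one-dimensional socle is spanned by the image of $D:=\det(\Jac(\uf))$, a form of degree exactly $s$. In the argument, condition (iii) serves only to force $\deg(H)=s$; conditions (i) and (ii) encode $\m$-primaryness and the membership $H\in(\uf)^\perp$.

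For ``complete intersection $\Rightarrow$ (i)--(iii)'', I would take a homogeneous regular sequence $\uf=\{f_0,\dots,f_n\}$ minimally generating $I:=\ann\langle H\rangle$ with $\deg(f_i)=d_i$. Then $(\uf)=I$ is $\m$-primary, giving (i), and $f_i\in\ann\langle H\rangle$ gives $f_i\circ H=0$, i.e.\ (ii). By \propref{mac}, $\langle H\rangle=I^\perp$, so $S/I$ is Gorenstein with socle degree $\deg(H)$; comparing with the complete-intersection value gives $\deg(H)=s$. Finally $D$ spans the nonzero socle of $S/I$, so $D\notin I=\ann\langle H\rangle$, which is exactly $D\circ H\neq 0$, i.e.\ (iii).

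For the converse, assume (i)--(iii) for some $\uf$ with $\deg(f_i)=d_i$, and put $J:=(\uf)$, $K:=\ann\langle H\rangle$. By (i), $J$ is $\m$-primary, so $S/J$ is a graded Artinian complete intersection, Gorenstein of socle degree $s$, with $J^\perp=\langle G\rangle$ for some homogeneous $G$ of degree $s$; hence every homogeneous element of $J^\perp$ has degree $\le s$. By (ii), $H\in(0:_\Gamma J)=J^\perp$, so $\langle H\rangle\subseteq J^\perp$ and $\deg(H)\le s$; applying the order-reversing bijection $\perp$ of \propref{mac} gives $J=(J^\perp)^\perp\subseteq\langle H\rangle^\perp=K$. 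On the other hand $D$ is homogeneous of degree $s$ and, by (iii), $D\circ H\neq 0$; since a homogeneous differential operator of order $>\deg(H)$ annihilates $H$, this forces $\deg(H)\ge s$, hence $\deg(H)=s$. Thus $\langle H\rangle=K^\perp$ shows $S/K$ is Gorenstein of socle degree $s$, so $\soc(S/K)_s\neq 0$, and \lemref{KU} applied to the Gorenstein ring $S/J$ (socle degree $s$) and the ideal $K\supseteq J$ yields $J=K$, i.e.\ $\ann\langle H\rangle=(f_0,\dots,f_n)$. Since $\uf$ is a regular sequence generating a height-$(n+1)$ ideal it is a minimal generating set, so $\ann\langle H\rangle$ is a complete intersection of type $\ud$, and $\deg(H)=s=\sum_{i=0}^n d_i-(n+1)$.

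I expect the only delicate input to be the two imported statements on complete intersections: that $D=\det(\Jac(\uf))$ has degree $\sum d_i-(n+1)$, equal to the socle degree of $S/(\uf)$, and that its image spans the socle. The degree count is immediate, the socle-degree value follows from the Koszul (Hilbert-series) resolution, and ``the Jacobian spans the socle'' is classical in characteristic zero (Scheja--Storch; see also \cite{Ku92}). Granting these, the proof is essentially degree bookkeeping together with one application of \lemref{KU}; I foresee no further obstruction, beyond recording the standard fact that a length-$(n+1)$ regular sequence in $S$ minimally generates the ideal it defines.
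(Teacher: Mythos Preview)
Your proof is correct and follows essentially the same route as the paper: in both directions you use $\Res(\uf)\neq 0\Leftrightarrow\uf$ regular, Macaulay duality for the containment $(\uf)\subseteq\ann\langle H\rangle$, the Scheja--Storch fact that $\det(\Jac(\uf))$ spans the socle of $S/(\uf)$, and then \lemref{KU} to conclude equality. The only cosmetic difference is that in the converse you use (iii) to pin down $\deg(H)=s$ and then invoke the Gorenstein property of $S/\ann\langle H\rangle$ to get $\soc_s\neq 0$, whereas the paper uses (iii) more directly to say that the image of $\det(\Jac(\uf))$ itself is a nonzero degree-$s$ socle element of $S/\ann\langle H\rangle$; these are equivalent verifications of the hypothesis of \lemref{KU}.
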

\begin{proof}
Assume that $K=\ann\langle H\rangle$ is a complete intersection ideal generated by a homogeneous regular sequence $\uf=\{f_0,\dots ,f_n\}\in S$ of type $(d_0,\dots,d_n)$.
Since $(f_0,\dots ,f_n)\subset K$, by Macaulay duality we get $(ii)$.
Being $\{f_0,\dots ,f_n\}$ a regular sequence we have $\Res(\uf)\neq 0$, \cite{SS01} or \cite[Proposition 5.17]{EM07}.
By hypothesis $K$ is a complete intersection, so from \cite{SS75}, see also \cite[Theorem 2.4]{Vas91}, we have that the coset of $\det(\Jac(\uf))$
in $S/K$ is a generator of its socle, in particular $\det(\Jac(\uf))\circ H\neq 0$.

Let now $\uf=\{f_1,\dots ,f_n\}\in S$ be a sequence of homogeneous elements of $S$ satisfying the conditions $(i)$, $(ii)$ and $(iii)$.
From $(i)$ we get that  $\uf$ is a regular sequence, \cite{SS01} or \cite[Proposition 5.17]{EM07}.
From $(ii)$ and Macaulay's duality we get that $K=(f_0,\dots,f_n)\subset J=\ann\langle H\rangle$, so we can consider the natural projection
\begin{equation}
\label{eq1}
\frac{S}{K} \longrightarrow \frac{S}{J}
\end{equation}

Being $K$  a complete intersection of height $n+1$ the socle of $S/K$ is concentrated in a single degree
$s=\sum_{i=0}^n d_i -(n+1)$, and it is generated by the coset of $\det(\Jac(\uf))$,
\cite{SS75}, see also \cite[Theorem 2.4]{Vas91}.
By \eqref{eq1} the coset of $\det(\Jac(\uf))$ in $S/J$  belongs to its socle.

From $(iii)$ we get that the ideal  of $S/J$ generated by  $\det(\Jac(\uf))$ is not zero and of degree $s$.
Since
 the coset of $\det(\Jac(\uf))$ in $S/J$ is non-zero and  belongs to $\soc(R/J)$,
from \lemref{KU} we deduce that $K=J$, i.e. $(f_0,\dots,f_n)=\ann\langle H\rangle$.

We know that the coset of  $\det(\Jac(\uf))$ in $S/K$ is a generator of its socle, so
$\deg(H)=\deg(\det(\Jac(\uf)))=\sum_{i=0}^n \deg(f_i) -(n+1)$.
\end{proof}

\medskip
\begin{remark}
Let $H\neq 0$ be a homogeneous polynomial.
Since $\ann\langle H \rangle$ is of maximal height
there always exists a sequence $\uf$ satisfying $(i)$ and $(ii)$.
\end{remark}

Given integers $1\le d_0\le\dots \le d_n$  we consider the multi-projective space
$
{\mathbb E}= \mathbb{V}\times \proj{N(s)}
$
parameterizing the pairs $(\uf,H)$ where $\uf=\{f_0,\dots, f_n\}$ is a $(n+1)$-pla of homogeneous forms of degrees $d_1,\dots, d_n$, and $H$ is a homogeneous polynomial on $y_0,\dots, y_n$ of degree $s=\sum_{i=0}^n d_i -(n+1)$.
We denote by $\pi$ the natural projection in the second component of $\mathbb E$:
$
\pi:{\mathbb E}\longrightarrow \proj{N(s)}.
$

Let $U$ be the open subset of $\mathbb{E}$ defined by the $(n+1)$-plas $\uf$ such that
$\Res(\uf)\neq 0$.
We consider  the locally closed subset $Z_1$ (resp. closed subset  $Z_2$) of $\mathbb E$ defined by the condition
$\uf \circ H=0$ and $\uf\in U$ (resp. $\det(\Jac(\uf))\circ H=0$).
For all $[H]\in \proj{N(s)}$, we denote the fiber of $[H]$ in $Z_i$ by $\pi$, $i=1,2$,
$$(Z_i)_H=Z_i\cap \pi^{-1}([H])=\{(\uf,[H])\in Z_i\}.$$

\medskip
\begin{proposition}
\label{first-set}
Let $H$ be a degree $s$ homogeneous form.
Then $\ann\langle H\rangle$ is  a complete intersection of type $(d_0,\dots ,d_n)$ if and only if
$$
(Z_1)_H \nsubseteq (Z_2)_H.
$$
The set of homogeneous polynomials $[H]\in \proj{N(s)}$ such that $(Z_1)_H \nsubseteq (Z_2)_H$
is a locally constructible set.
\end{proposition}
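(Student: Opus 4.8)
The first equivalence is essentially a restatement of \propref{char}. The plan is as follows. Unwinding the definitions, $(\uf,[H])\in (Z_1)_H$ means precisely that $\uf$ satisfies conditions (i) and (ii) of \propref{char}, and $(\uf,[H])\in (Z_2)_H$ means that $\uf$ fails condition (iii). Hence $(Z_1)_H\nsubseteq (Z_2)_H$ says exactly that there exists a tuple $\uf$ satisfying (i), (ii) and (iii) simultaneously, which by \propref{char} is equivalent to $\ann\langle H\rangle$ being a complete intersection of type $(d_0,\dots,d_n)$. One small point to address: in the definition of $(Z_i)_H$ the conditions $\uf\circ H=0$ and $\det(\Jac(\uf))\circ H=0$ are stated for representatives; since both are linear (resp. homogeneous) in the coefficients of $H$ and in the appropriate coefficients of the $f_i$, the vanishing loci are well defined on the product of projective spaces, and the equivalence goes through verbatim. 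I would spell this out in one or two sentences.

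For the second assertion — that $W:=\{[H]\in \proj{N(s)} \mid (Z_1)_H\nsubseteq (Z_2)_H\}$ is locally constructible — the plan is to realize $W$ as the image of a constructible set under $\pi$ and invoke Chevalley's theorem. Concretely, set $Z:=Z_1\setminus Z_2\subset \mathbb E$. Since $Z_1$ is locally closed in $\mathbb E$ (it is the intersection of the closed condition $\uf\circ H=0$ with the open set $U$) and $Z_2$ is closed in $\mathbb E$, the difference $Z=Z_1\cap(\mathbb E\setminus Z_2)$ is locally closed, hence constructible. Now observe that $[H]\in W$ if and only if the fiber $Z\cap\pi^{-1}([H])$ is nonempty, i.e. $W=\pi(Z)$. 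Since $\mathbb E$ is a product of projective spaces over $\res$, $\pi$ is a morphism of varieties (in fact a projection), and by Chevalley's theorem the image $\pi(Z)$ of the constructible set $Z$ is constructible in $\proj{N(s)}$; in particular it is locally constructible. This completes the proof.

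The only genuinely delicate point is the well-definedness and scheme-theoretic description of $Z_1$ and $Z_2$ inside $\mathbb E$: one must check that ``$\uf\circ H=0$'' cuts out a genuine closed subscheme of $\mathbb V\times\proj{N(s)}$. This is routine — fixing a monomial basis of $S_{s-d_i}$ for each $i$, the coefficients of $f_i\circ H$ (an element of $S_{s-d_i}$, which one can identify with $\Gamma_{s-d_i}$) are bilinear forms in the coefficients of $f_i$ and of $H$, so their simultaneous vanishing is a multihomogeneous closed condition; similarly $\det(\Jac(\uf))\circ H$ has coefficients that are polynomial in the data, multihomogeneous of the appropriate degrees. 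Once this is granted, everything else is a formal application of ``locally closed minus closed is constructible'' together with Chevalley. I would therefore keep the emphasis of the writeup on the translation through \propref{char} and on invoking Chevalley, and treat the multihomogeneity bookkeeping as standard.
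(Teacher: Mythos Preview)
Your proof is correct and follows essentially the same approach as the paper: the equivalence is obtained by translating the fiber conditions into conditions (i)--(iii) of \propref{char}, exactly as in the paper's argument (stated there by contrapositive). For the constructibility statement the paper simply cites \cite[Corollaire 9.5.2]{EGA-IV-III}, while you spell out the same idea by writing the set as $\pi(Z_1\setminus Z_2)$ and invoking Chevalley's theorem; these are the same argument, yours being the more explicit version.
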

\begin{proof}
Assume that $\ann\langle H\rangle$ is not a complete intersection.
Let $(\uf,H)$ be an element of $(Z_1)_H$.
From \propref{char} we get  $\det(\Jac(\uf))\circ H=0$, so  $(\uf,H)\in (Z_2)_H$.

Let $H$ be an homogeneous form of degree $s$ such that
$(Z_1)_H \subset (Z_2)_H$.
Assume that
$\ann\langle H\rangle$ is a complete intersection of type $d_0,\dots ,d_n$.
Then there exists a family $\uf=(f_0,\dots,f_n)$ satisfying the conditions of
\propref{char}.
In particular, condition $(i)$ and $(ii)$ is equivalent to $(\uf,H)\in Z_1$.
By hypothesis $(\uf,H)\in Z_2$ so $\det(\Jac(\uf))=0$.
This a contradiction with the condition $(iii)$ of \propref{char}.

From \cite[Corollaire 9.5.2]{EGA-IV-III} we get that
the set of homogeneous polynomials $[H]\in \proj{N(s)}$ such that $(Z_1)_H \nsubseteq (Z_2)_H$
is a locally constructible set.
\end{proof}

The computation of the defining equations of the locally constructible  set of the last result, in contrast the case of the resultant, could be difficult.
In the next section we will study a concrete example where \propref{char} is applied for proving that some Gorenstein ideal $\ann\langle H\rangle$ is
a complete intersection.
First we review the example 3.5 of \cite{HWW17}.

\begin{example}
In this example we follow the notations of \cite[Example 3.5]{HWW17}.
We consider the ring of polynomials $S=\res[v, w,x,y,z]$.

\noindent
$(1)$ of \cite[Example 3.5]{HWW17}.
Given $H=vwxyz+wxyz^2$,  $\ann\langle H\rangle$ contains the sequence $\uf=\{v^2, w^2,x^2, y^2 z^2 -2vz\} $, i.e. satisfies the condition $(ii)$ of \propref{char}.
Since $S/(\uf)$ is Artininan we get condition $(i)$ of \propref{char}.
On the other hand, we get $\det(\Jac(\uf))=2^5 v w x y (z-v)$, so
$\det(\Jac(\uf))\circ H=2^5\neq 0$.
Hence $\ann\langle H\rangle=(\uf)$ is a complete intersection.

\noindent
$(2)$ of \cite[Example 3.5]{HWW17}.
We proceed as the previous case.

\noindent
$(3)$ of \cite[Example 3.5]{HWW17}.
Let us consider $H=vwxyz+yz^4$.
If $\ann\langle H\rangle$ is a complete intersection, since $5=d_0+\dots +d_4-5$, then  we get that $d_0=\dots =d_4=2$.
The $\res$-vector space of forms of degree two of $\ann\langle H\rangle$ admits as $\res$-base the monomials: $v^2, w^2, x^2, y^2$, i.e. a $\res$-base of  $\uf$ satisfying $(ii)$ of \propref{char}.
Hence,  any $5$-pla  $\uf=(f_0,\dots, f_5)$  formed by $5$ forms of degree two are linearly dependent, so $\det(\Jac(\uf))=0$ and then $\ann\langle H\rangle$ is not a complete intersection.
\end{example}

\section{Case study: short algebras}

In this section we consider  short Gorenstein algebras: local Gorenstein algebras $A=\res[\![x_1,\dots,x_r]\!]/I$ with Hilbert function $\{1,r,r,1\}$.
Recall that in the main result of \cite{ER12} we prove that these algebras are isomorphic
to  their associated graded ring, i.e. we can consider that $A\cong \res[x_1,\dots,x_r]/K$ with $K$ a homogeneous ideal.
On the other hand if $K$ is a complete intersection then $r\le 3$.
The cases $r\le 2$ are computed directly, see \cite{ER12}.
In this example we study the case $r=3$, following the general notations of this paper we have that  $n=2$.
Hence  $K=\ann\langle H\rangle$ is an ideal of $S=\res[x_0,x_1,x_2]$ with $H$ a degree $3$ homogeneous polynomial.

In \cite{ER12} we computed the isomorphism classes of these algebras in terms of the moduli of the degree three projective plane curve defined by $H$.
Recall that a plane elliptic  cubic curve $C\subset \mathbb P^{2}_\res$ is defined, in a suitable
system of coordinates, by a   Legendre's equation
$$
H_\lambda=y_1^2 y_2- y_0(y_0-y_2)(y_0-\lambda y_2)
$$
with $\lambda \neq 0,1$.
The  $j$-invariant of $C$ is:
$$
j(\lambda)= 2^8 \; \frac{(\lambda^2-\lambda+1)^3}{\lambda^2 (\lambda -1 )^2}.
$$
It is well known that
two plane elliptic  cubic  curves $C_i=V(H_{\lambda_i})\subset \mathbb P^{2}_\res=\spec(S)$, $i=1,2$, are projectively isomorphic
if and only if $j(\lambda_1)=j(\lambda_2)$.
In \cite[Proposition 3.7]{ER12} we proved that:

\begin{proposition}\cite[Proposition 3.7]{ER12}
\label{ell}
Let $A$ be an Artinian   Gorenstein local $\res$-algebra   with Hilbert function
$\HF_A=\{1,3,3,1\}$. Then $A$ is isomorphic to one and only one of the following graded
  quotients of  $S=\res[x_0,x_1,x_2]$:
$$
\begin{array}{|c|c|c|}           \hline
 \text{ Model } A=S/K& \text{Inverse system }& \text{Geometry of } C=V(K)\subset \mathbb P^{2}_\res \\ \hline
 (x_0^2,x_1^2,x_2^2) & y_0 y_1 y_2 & \text{Three independent lines} \\  \hline
 (x_0^2,x_0x_3^2,x_2x_1^2,x_1^3,x_2^2+x_0x_1) & y_1(y_0y_1- y_2^2) & \text{Conic and a tangent line} \\  \hline
 (x_0^2,x_1^2,x_2^2+6x_0x_1) &  y_3(y_1y_2- y_3^2)& \text{Conic and a non-tangent line} \\  \hline
 (x_1^2, x_0x_1,x_0^2+x_1^2-3 x_0x_2) &  y_1^2 y_2- y_0^2( y_0+ y_2)& \text{Irreducible nodal cubic} \\  \hline
 (x_2^2,x_0x_1, x_0x_2,x_1^3,x_0^3+ 3x_1^2x_2) & y_1^2 y_2 - y_0^3 & \text{Irreducible cuspidal cubic} \\  \hline
 (x_1x_2,x_0x_2,x_0x_1,x_1^3-x_2^3,x_0^3-x_2^3) & y_0^3+y_1^3+y_2^3  &  \text{Elliptic Fermat curve} \\ \hline
 K_\lambda=(x_0x_1,F_1,F_2) & H_\lambda,\; j(\lambda)\neq 0& \text{Elliptic non Fermat curve} \\ \hline
 \end{array}
 $$
 with
$F_1=\lambda^2 x_0^2 + \lambda (1+\lambda) x_0 x_2 + (\lambda^2-\lambda+1)x_2^2$,
$F_2=\lambda^2 x_1^2 + \lambda x_0 x_2 + (1+\lambda)x_2^2$,
 and $K_{\lambda_1}\cong  K_{\lambda_2}$ if and only if $j(\lambda_1)=j(\lambda_2)$.
\end{proposition}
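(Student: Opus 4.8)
The plan is to reduce the classification of these algebras to the classical projective classification of plane cubic curves, and then dispatch each class by an explicit inverse‑system computation. By the main theorem of \cite{ER12} recalled above, such an $A$ is isomorphic to its associated graded ring, so we may assume $A=S/K$ with $K\subset S=\res[x_0,x_1,x_2]$ homogeneous; by \propref{mac} we have $K=\ann\langle H\rangle$ for a homogeneous cubic $H\in\Gamma=\res[y_0,y_1,y_2]$, unique up to a nonzero scalar. The decisive point is that $H\mapsto\ann\langle H\rangle$ is equivariant for linear changes of variables: a suitable linear action of $g\in\mathrm{GL}_3(\res)$ on $\Gamma$ satisfies $g\cdot\ann\langle H\rangle=\ann\langle g\circ H\rangle$, and since every graded $\res$‑algebra automorphism of $S$ is induced by an element of $\mathrm{GL}_3(\res)$, the algebras $S/\ann\langle H_1\rangle$ and $S/\ann\langle H_2\rangle$ are isomorphic if and only if $H_2=\lambda(g\circ H_1)$ for some $\lambda\in\res^\ast$ and $g\in\mathrm{GL}_3(\res)$ --- that is, if and only if the plane cubics $V(H_1),V(H_2)\subset\spec(S)$ are projectively equivalent. (Graded and abstract isomorphism agree for these short algebras, again by \cite{ER12}.)

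Next I would identify which cubics yield Hilbert function $\{1,3,3,1\}$. Since $A$ is Gorenstein of socle degree $3$, its Hilbert function is symmetric, hence of the form $\{1,h,h,1\}$ with $h=\dim_\res\langle\partial_{y_0}H,\partial_{y_1}H,\partial_{y_2}H\rangle\le 3$; thus $h=3$ exactly when the first partials of $H$ are $\res$‑linearly independent, i.e. when $H$ genuinely involves all three variables, i.e. when $V(H)$ is not a cone (a union of concurrent lines counted with multiplicity). So the algebras to be classified correspond bijectively to projective equivalence classes of plane cubics that are not cones. Invoking the classical projective classification of plane cubics and deleting the three cone types --- a triple line, a line plus a double line, three concurrent lines --- exactly seven classes remain: three non‑concurrent lines; a smooth conic with a tangent line; a smooth conic with a transversal line; an irreducible nodal cubic; an irreducible cuspidal cubic; the Fermat cubic $y_0^3+y_1^3+y_2^3$ (the unique smooth cubic with $j=0$); and the Legendre cubics $H_\lambda$, $\lambda\ne 0,1$, realizing the remaining values of the $j$‑invariant, two of them defining isomorphic curves precisely when their $j$‑invariants coincide.

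For each of the six rigid classes and for the family $H_\lambda$ I would then pick a defining cubic $H$, compute $\ann\langle H\rangle$, and check that it has the generators displayed in the table and Hilbert function $\{1,3,3,1\}$; for instance $y_0y_1y_2\leftrightarrow(x_0^2,x_1^2,x_2^2)$, $y_1^2y_2-y_0^3\leftrightarrow$ the cuspidal row, and $H_\lambda\leftrightarrow(x_0x_1,F_1,F_2)$ with the stated $F_1,F_2$. Pairwise distinctness is then immediate: by the first step the algebra remembers exactly the $\mathrm{PGL}_3$‑class of the cone‑free cubic $V(H)$, and the geometric type --- reducible versus irreducible, smooth versus singular, node versus cusp, and, among the smooth ones, the $j$‑invariant --- is a complete projective invariant; in particular $K_{\lambda_1}\cong K_{\lambda_2}$ if and only if $j(\lambda_1)=j(\lambda_2)$.

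The main obstacle is the bookkeeping of the third step: verifying the explicit generator lists for the reducible and singular rows (finite inverse‑system computations, the sort automated by \cite{E-InvSyst14}), and, in the elliptic case, matching the inverse system of $H_\lambda$ with the apparatus $(x_0x_1,F_1,F_2)$ and transporting the projective equivalence of Legendre cubics to the equality $j(\lambda_1)=j(\lambda_2)$ through the classical $j$‑invariant theory of plane elliptic curves.
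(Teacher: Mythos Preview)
Your plan is sound and is essentially the strategy of \cite{ER12}, which the paper simply cites for this proposition rather than reproving it in full. The paper does not give a self-contained proof here: it states the result, remarks that all rows except the last can be verified by a direct Singular computation, and recalls that in \cite{ER12} the elliptic non-Fermat row was handled by checking $J=(x_0x_1,F_1,F_2)\subset K_\lambda$ and then counting $\dim_\res(S/J)=8$.

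What the paper \emph{does} contribute, and what your plan does not touch, is an alternative verification of that last row via \propref{char}: one checks that $\uf=\{x_0x_1,F_1,F_2\}$ satisfies (ii) because $\uf\circ H_\lambda=0$, (i) because $J$ is $\max$-primary, and (iii) because
\[
\det(\Jac(x_0x_1,F_1,F_2))\circ H_\lambda=-16\lambda^2(\lambda^2-\lambda+1)\neq 0
\]
when $j(\lambda)\neq 0$; then \propref{char} forces $(x_0x_1,F_1,F_2)=\ann\langle H_\lambda\rangle$. This Jacobian-determinant check is the whole reason the proposition appears in this paper. Your length/dimension or inverse-system computation for the parametric family would also work, but it bypasses the paper's main criterion; if you are writing this up in the context of the present paper, you should replace the ``matching the inverse system'' step for $H_\lambda$ by the three-condition verification from \propref{char}.
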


Notice that the minimal number of generators of the  height three Gorenstein ideal $K$ of the last result are odd, as the main theorem of \cite{Wat73} prescribes.
All  cases in \propref{ell} can be easily computed  by using Singular but the last one, \cite{E-InvSyst14}.
In this  case a parameter is involved: the $j$-invariant of the elliptic curve.
In \cite{ER12}  we settle  this case by showing first that
$J=(x_0x_1,F_1,F_2)\subset K_\lambda$ and then
proving that $\dim_\res(S/J)=8$.
We can prove this fact by using \propref{char}.
We know that $J=(x_0x_1,F_1,F_2)\subset K_\lambda=\ann\langle H_\lambda \rangle$, so $\uf=\{x_0x_1,F_1,F_2\}$ satisfies condition $(ii)$ of \propref{char}.
On the other hand it is easy to prove that
$J$ is $\max$-primary, so we have condition $(i)$ of \propref{char}.
Since $j(\lambda)\neq 0$
$$
\det(\Jac(x_0x_1,F_1,F_2))\circ H_\lambda= -16\lambda^2 (\lambda^2-\lambda+1)\neq 0.
$$
Hence we get  condition $(iii)$ of \propref{char} and then $(x_0x_1,F_1,F_2)=K_\lambda$.


\medskip
\noindent
Joan Elias\\
Departament de Matemàtiques i Informàtica\\
Universitat de Barcelona\\
Gran Via 585, 08007 Barcelona, Spain\\
e-mail: {\tt elias@ub.edu}
\end{document}